\documentclass[12pt]{amsart}
\usepackage{latexsym, amsmath, amscd, amssymb, amsthm} 
\textwidth=17cm
\textheight=23.7cm
\voffset=-0.5 cm
\hoffset=-1.4cm

\newtheorem{te}{Theorem}[section]
 \newtheorem{oz}{Definition}[section]

\begin{document}

\noindent

 \title[]{ Covariants of binary forms and new identities for Bernoulli, Euler and Hermite polynomials }

\author{Leonid Bedratyuk}\address{Khmelnitskiy national university, Insituts'ka, 11,  Khmelnitskiy, 29016, Ukraine}

\begin{abstract} Using the methods of classical invariant theory a general approach to finding  of identities for  Bernulli, Euler and Hermite polynomials  is proposed.
 
\end{abstract}
\maketitle

\section{Introduction}
The relationship between the  group representations theory  and special functions is well known, see. \cite{Vil}.
 In this paper we establish the relationship between the classical invariant theory  and identities for  Bernulli, Euler, and Hermite polynomials.

The polynomials of Bernoulli    $B_n(x),$ Euler $E_n(x)$ and Hermite  $H_n(x),$ $n=0,1,2,\ldots $  are defined by the following generating functions
\begin{gather*}
\frac{t e^{xt}}{e^t-1}=\sum_{i=0}^{\infty} B_n(x) \frac{ t^n}{n!}, \frac{2e^{xt}}{e^t+1}=\sum_{i=0}^{\infty}E_n(x)  \frac{t^n}{n!}, e^{xt-\frac{t^2}{2}}=\sum_{i=0}^{\infty}H_n(x) \frac{ t^n}{n!}. 
\end{gather*}
Particularly $B_0(x)=E_0(x)=H_0(x)=1.$ The numbers  $B_n=B_n(0)$   are called the Bernoulli numbers   and the numbers $E_n=E_n(0)$ are called the Euler numbers.
All these types of polynomials are special cases of the Appell polynomials $\mathcal{A}=\{A_n(x) \},$ where $\deg(A_n(x))=n$ and the polynomials  satisfy the identity
\begin{gather}
A'_n(x)=n A_{n-1}(x), n=0,1,2,\ldots .
\end{gather}
It is clear that the polynomials $ \{x ^ n \} $ are  the  Appell polynomials also.
Denote by  $\mathcal{B},$ $\mathcal{E},$ $\mathcal{H},$ the Bernulli, Euler and Hermite polynomials, respectively. Also, put   $\mathcal{T}:=\{ 1, x, x^2, \ldots, x^n, \ldots. \}.$

We are interested in finding all polynomial identities for the Appell polynomials, i.e.  identities  of the form
$$
F(A_0(x),A_1(x),\ldots, A_n(x))=0,
$$
where  $F$ is some polynomial of  $n+1$ variables.
 
First, consider the motivating examples.
Let
\begin{gather*}
\Delta(x):=
\begin{vmatrix}
B_0(x) & B_1(x) \\
B_1(x) & B_2(x) \\
\end{vmatrix}=B_0(x) B_2(x)-B_1(x)^2.
\end{gather*}
Taking into account  $(1)$ we have
\begin{gather*}
\Delta(x)'=(B_0(x) B_2(x)-B_1(x)^2)'=B_0(x)' B_2(x)+B_0(x) B_2(x)'-2 B_0(x) B_1(x)=\\
=2 B_0(x) B_1(x)-2 B_0(x) B_1(x)=0
\end{gather*}
Thus $\Delta (x)$ is a constant, and it is clear that this constant is equal to $ \Delta (0).$
Substituting the corresponding Bernoulli polynomial, we find the constant and get the following identity 
 \begin{gather*}
B_0(x) B_2(x)-B_1(x)^2=B_0 B_2-B_1^2=-\frac{1}{12}.
\end{gather*}
Similarly, $
E_0(x) E_2(x)-E_1(x)^2=-\frac{1}{4} 
$
and  $
H_0(x) H_2(x)-H_1(x)^2=-1.
$

Consider now  the following differential operator 
 \begin{gather*}
\mathcal{D}:=a_0 \frac{\partial}{\partial a_1}+2 a_1 \frac{\partial}{\partial a_2}+\cdots+n a_{n-1} \frac{\partial}{\partial a_n},
\end{gather*}
which acts on  polynomials of the variables $a_0, a_1, \ldots, a_n.$ 
The action is very similar to (1).
Also, it is easy to see that  $\mathcal{D}(a_0 a_2-a_1^2)=0.$

Consider the polynomial
$$
\Delta_3: =\begin{vmatrix}
a_0 & 3 a_1 &3 a_2 & a_3 &0 \\
0 & a_0 & 3 a_1 &3 a_2 & a_3 \\
3a_0 & 6 a_1 &3 a_2 & 0 &0 \\
0&3a_0 & 6 a_1 &3 a_2 & 0  \\
0&0&3a_0 & 6 a_1 &3 a_2  \\
\end{vmatrix}
$$
Note that  the polynomial  $\Delta_3$ is, up to a factor,  the discriminant of an binary form of degree 3:
$$
a_{{0}}{X}^{3}+3\,a_{{1}}{X}^{2}Y+3\,a_{{2}}X{Y}^{2}+a_{{3}}{Y}^{3}.
$$
By applying the determinant derivative rule we get that  $\mathcal{D}(\Delta)$ equals to sum of  5 determinants  each of them equal to zero. Thus $\mathcal{D}(\Delta)=0.$ Similarly, for the determinant 
$$
\Delta_3(\mathcal{A}): =\begin{vmatrix}
A_0(x) & 3 A_1(x) &3 A_2(x) & A_3(x) &0 \\
0 & A_0(x) & 3 A_1(x) &3 A_2(x) & A_3(x) \\
3A_0(x) & 6 A_1(x) &3 A_2(x) & 0 &0 \\
0&3A_0(x) & 6 A_1(x) &3 A_2(x) & 0  \\
0&0&3A_0(x) & 6 A_1(x) &3 A_2(x)  \\
\end{vmatrix}
$$
we obtain   $\Delta(\mathcal{A})'=0.$  Therefore, for any  Appell polynomials $\{A_n(x)\}$  the identity  $$\Delta_3(\mathcal{A})={\rm const}$$ holds. By direct calculations we obtain
$$
\Delta_3(\mathcal{B})=\frac{1}{16}, \Delta_3(\mathcal{E})=\frac{27}{16}, \Delta_3(\mathcal{H})=108.
$$

These examples lead to the hypothesis that if a polynomial $S(a_0, a_1,\ldots, a_n)$ satisfies the condition $\mathcal{D}(S(a_0, a_1,\ldots, a_n))=0,$ 
then the polynomial  $S(A_0(x), A_1(x),\ldots, A_n(x))$ is a constant, thus determines the identities between the Appell polynomials. 

We proceed, by way of summarizing the approaches, as follows. 
Let   $\mathbb{K}[a_0,a_1,\ldots, a_n]$ and  $\mathbb{K}[x]$ be the algebras of polynomials over a field $\mathbb{K}$ of characteristic zero. 
Let us consider the substitution homomorphism
 $\varphi_{\mathcal{A}}:\mathbb{K}[a_0,a_1,\ldots, a_n] \to \mathbb{K}[x] $ defined  by   $\varphi_{\mathcal{A}}(a_i)=A_i(x).$  Put
$$
\ker^* \varphi_{\mathcal{A}}:=\{ S \in  \mathbb{K}[ a_0,a_1,...,a_n]  \mid \varphi_{\mathcal{A}}(S) \in \mathbb{K} \}.
$$ We will prove that any element  $S(a_0,a_1,\ldots,a_n)$ of the subalgebra $ \ker^* \varphi_{\mathcal{A}}$ defines an identity 
$$
S(\mathcal{A})=\| S(\mathcal{A}) \|,
$$
here  $S(\mathcal{A}):=S(A_0(x),A_1(x),\ldots, A_n(x))$ and $\| S(\mathcal{A}) \|=S(A_0(0),A_1(0),\ldots, A_n(0)).$

Therefore, the problem of describing all polynomial identities for the  Appell polynomials is reduced to that of describing  the algebra $\ker^* \varphi_{\mathcal{A}}.$
It will be shown in the paper that the algebra $\ker^* \varphi_{\mathcal{A}}$ is isomorphic to the algebra of covariants of binary form of order $ n. $

This idea can also be applied  for finding identities for  different types of  Appell polynomials. For instance, we have:
 
\begin{gather*}
\begin{vmatrix}
B_0(x) & E_0(x)&H_0(x) \\
B_1(x) & E_1(x)&H_1(x) \\
B_2(x) & E_2(x)&H_2(x) \\
\end{vmatrix}=\\ =B_{{0}}(x) E_{{1}}(x) H_{{2}}(x) -B_{{0}}(x) H_{{1}}(x) E_{{2}}(x)-B_{{1}}(x) E_{{0}}(x) H_{{2}}(x)+\\ +B_{{
1}}(x) H_{{0}}(x) E_{{2}}(x) +B_{{2}}(x) E_{{0}}(x)H_{{1}}(x)-B_{{2}}(x)H_{{0}}(x)E_{{1}}(x)=\frac{1}{12}.
\end{gather*}
The problem of describing  all such polynomials identities for different  Appell polynomials is reduced to that of describing of the algebra of joint covariants for several binary forms. The algebra of covariants  of binary form and the algebra of  joint covariants for several  binary forms were an
object of research in the classical invariant theory of the 19th century. In particular, developed efficient methods to find elements of this algebra.

We will deal mainly with the algebra of semi-invariants rather than the algebra covariants. These algebras are isomorphic.  But,   the algebra  of semi-invariants is simpler object for computation.

In this  paper  we give a brief introduction to the theory of covariants and semi-invariants of binary form  on the locally nilpotent derivations language.
Based on the  classical invariant theory approach the  several types of identities for Appell polynomials are constructed.


\section{Covariants and semi-invariants of binary forms}

Let us recall, that a derivation of a ring   $R$ is an additive  map  $D$ satisfying the Leibniz rule: 
$$
D(r_1 \, r_2)=D(r_1) r_2+r_1 D(r_2), \text{  äëÿ âñ³õ }  r_1, r_2 \in R.
$$
A derivation $D$ of a ring $R$ is called locally nilpotent if for every $r \in R$ there is an $n \in \mathbb{N}$ such that $D^n(r)=0.$
The subring 
$$
\ker D:=\left \{ f \in R|  D(f)=0 \right \},
$$
is called the kernel of the derivation $D.$

Let us consider the algebra of polynomials   $\mathbb{K}[a_0,a_1,\ldots, a_n]$ over the field $\mathbb{K}$ of characteristic 0. Define the derivations $\mathcal{D},$ $\mathcal{D}^*$  and  $E$  of the algebra  $\mathbb{K}[A_n]$  by 
$$
\mathcal{D}(a_i)=i a_{i-1}, \mathcal{D}^*(a_i)=(n-i) a_{i+1}, E(a_i)=(n-2i) a_i.
$$
Note, that the derivations  $\mathcal{D},$ $\mathcal{D}^*,$    $E$  define  a representation  of the Lie algebra $\mathfrak{sl}_2(\mathbb{K}).$

Consider  the derivations $\mathcal{D}-Y \frac{\partial }{ \partial X}$ and $\mathcal{D}^*-X \frac{\partial}{\partial Y}$ of the polynomial algebra 
 $\mathbb{K}[a_0,\ldots, a_n, X,Y].$ 
Let us recall some concepts of  the classical invariant theory.
\begin{oz}
The homogeneous polynomial
\begin{gather}
\alpha(X,Y):=a_0X^n+n a_1 X^{n-1} Y+\cdots+{n \choose i} a_i X^{n-i} Y^i+\cdots + a_n Y^n. 
\end{gather}
is called the generic binary form of order  $n.$
 \end{oz}
\begin{oz}  
$$
\begin{array}{ll}
{\bf 1.}  & \text {The algebra  } \displaystyle  \mathcal{C}_n:=\ker( \mathcal{D}-Y \frac{\partial }{ \partial X})  \bigcap \ker (\mathcal{D}^*-X \frac{\partial}{\partial Y}) \text { is called  }\\
&\\
&  \text { the algebra of covariants for the generic binary form (2)};\\
{\bf 2.}  & \text {The algebra  } \displaystyle  \mathcal{S}_n:=\ker( \mathcal{D}) \text { is called  the algebra of  semi-invariants }\\
&  \text { for the generic binary form};\\

{\bf 3.}   & \text {The algebra  } \displaystyle \mathcal{I}_n:=\ker( \mathcal{D})  \bigcap \ker (\mathcal{D}^*) \text { is called   }\\
&  \text { the algebra of  invariants for the generic binary form};\\
\end{array}
$$
\end{oz}
The elements of the algebras  $\mathcal{C}_n,$ $\mathcal{S}_n,$  $\mathcal{I}_n$ are called the covariants, semi-invariants and invariants of the binary form. The following   obvious inclusion holds: $\mathcal{I}_n \subset \mathcal{C}_n$ i $\mathcal{I}_n \subset \mathcal{S}_n.$ It is well know that these algebras are  finitely generated algebras.

{\bf Example 2.1} It is easy to check  that the generic form $\alpha(X,Y)$ is a covariant and its leading coefficient  $a_0$ (in the ordering $X>Y$)  is a semi-invariant for the binary form of order $n.$ Also, the element $a_0 a_2-a_1^2$ is an invariant of the binary form of degree $2.$ 

Let  $$\varkappa:\mathcal{C}_n \to \mathcal{S}_n,$$  
be the $\mathbb{K}$-linear map that takes each homogeneous covariant  to its leading coefficient.

The following theorem holds
\begin{te}[\cite{Rob},\cite{Olver}]
The map  $\varkappa$ is the homomorphism of the algebras  $\mathcal{C}_n$ and $\mathcal{S}_n.$
\end{te}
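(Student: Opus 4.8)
The plan is to show that $\varkappa$ respects multiplication; linearity is built into the definition, so the only content is the identity $\varkappa(C_1 C_2)=\varkappa(C_1)\varkappa(C_2)$ for homogeneous covariants $C_1,C_2$. First I would recall the precise meaning of ``leading coefficient'': a covariant $C$ is a polynomial in $\mathbb{K}[a_0,\dots,a_n,X,Y]$, homogeneous in $X,Y$ of some degree $d$ (the order of the covariant), so we may write $C=\sum_{j=0}^{d}\binom{d}{j}c_j\,X^{d-j}Y^{j}$ with $c_j\in\mathbb{K}[a_0,\dots,a_n]$, and $\varkappa(C)=c_0$, the coefficient of $X^{d}$. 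The product $C_1C_2$ is again homogeneous in $X,Y$ of degree $d_1+d_2$, and the coefficient of $X^{d_1+d_2}$ in the product is manifestly the product of the coefficients of $X^{d_1}$ in $C_1$ and of $X^{d_2}$ in $C_2$; that is, $\varkappa$ is nothing but the ``coefficient of the top power of $X$'' map on the bigraded algebra $\mathbb{K}[a_0,\dots,a_n,X,Y]$ restricted to $\mathcal{C}_n$, and such a map is multiplicative on any subset of polynomials.

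The one genuine point to verify is that $\varkappa$ actually lands in $\mathcal{S}_n$, i.e. that the leading coefficient $c_0$ of a covariant $C$ satisfies $\mathcal{D}(c_0)=0$. Here I would use that $C\in\ker(\mathcal{D}-Y\frac{\partial}{\partial X})$. Writing $C=\sum_j \binom{d}{j}c_jX^{d-j}Y^{j}$ and applying $\mathcal{D}-Y\frac{\partial}{\partial X}$, the coefficient of $X^{d}$ in the result is $\mathcal{D}(c_0)$ (the term $Y\frac{\partial}{\partial X}$ lowers the $X$-degree, hence contributes nothing to the $X^{d}$-coefficient), so $\mathcal{D}(c_0)=0$ and indeed $\varkappa(C)\in\mathcal{S}_n$. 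This also shows, incidentally, that the map is well defined: the ``leading coefficient'' is unambiguous once $C$ is written as a homogeneous form in $X,Y$ with the binomial normalization, and even without that normalization the coefficient of $X^{d}$ is intrinsic.

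So the argument is: (i) express an arbitrary homogeneous covariant as a form in $X,Y$ with polynomial coefficients and define $\varkappa$ as extraction of the $X$-top coefficient; (ii) observe from the $\mathcal{D}-Y\frac{\partial}{\partial X}$ relation that this coefficient lies in $\ker\mathcal{D}=\mathcal{S}_n$, so $\varkappa:\mathcal{C}_n\to\mathcal{S}_n$ is well defined; (iii) note multiplicativity of top-$X$-coefficient extraction on the polynomial ring; (iv) conclude $\varkappa$ is an algebra homomorphism. I expect no real obstacle here — the statement is essentially a bookkeeping fact about gradings — but the step deserving care is (ii), making sure that the ``leading term'' convention and the action of the two derivations are aligned so that killing the $X$-top coefficient by $\mathcal{D}$ really does follow, rather than some shifted or mixed condition; if one instead extracts a different term one does not land in $\mathcal{S}_n$. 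I would also remark that $\varkappa$ is in fact an isomorphism (the inverse sends a semi-invariant to the unique covariant with that leading coefficient, obtainable by the Cayley $\Omega$-process or by repeatedly applying $\mathcal{D}^*-X\frac{\partial}{\partial Y}$), but that is more than the theorem asks and I would defer it.
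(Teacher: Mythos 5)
Your argument is correct. Note that the paper itself gives no proof of this statement: it is quoted as a classical result with references to Roberts and Olver, so there is no in-text argument to compare against. Your proof supplies the standard reasoning and it is sound: (i) for a homogeneous covariant $C=\sum_j\binom{d}{j}c_jX^{d-j}Y^j$ the operator $Y\frac{\partial}{\partial X}$ produces only terms of positive $Y$-degree, so the $X^d$-coefficient of $(\mathcal{D}-Y\frac{\partial}{\partial X})(C)$ is exactly $\mathcal{D}(c_0)$, whence $c_0\in\ker\mathcal{D}=\mathcal{S}_n$ and $\varkappa$ is well defined; (ii) extraction of the top $X$-coefficient from $(X,Y)$-homogeneous polynomials is multiplicative, since $X^{d_1+d_2}$ can only arise from the product of the two pure-$X$ terms. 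Your parenthetical observation that the coefficient of $X^d$ is intrinsic (independent of the binomial normalization) correctly disposes of the only well-definedness worry, and it is consistent with the paper's formula for $\varkappa^{-1}$, which reconstructs the covariant from $c_0$ by iterating $\mathcal{D}^*$ — this also shows $c_0\neq 0$ for a nonzero covariant, so ``leading coefficient in the ordering $X>Y$'' genuinely means the $X^d$-coefficient. Your deferral of the isomorphism claim is appropriate, as the theorem only asserts the homomorphism property.
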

The inverse map  $\varkappa^{-1}:\mathcal{S}_n \to \mathcal{C}_n,$  can be defined   as follows:
$$
\varkappa^{-1}(s)=\sum_{i=0}^{{\rm ord}(s)} \frac{( \mathcal{D}^*)^i(s)}{i!} X^{{\rm ord}(s)-i}Y^i,
$$
where 
 $${\rm ord}(s)=\max \{ k|( \mathcal{D}^*)^k(s) \neq 0 \}.$$
The integer number ${\rm ord}(s)$ is called the order of the semi-invariant  $s.$ The degree of covariant with respect  to the variables $X, Y $ is called the order of the covariant.

Similarly, we can define the algebras of covariants, semi-invariants and invariants of several generic binary forms.
Let us consider  the following three  generic binary forms of order $n:$
\begin{gather*}
\beta(X,Y):=b_0X^n+nb_1 X^{n-1} Y+\cdots+{m \choose i} b_i X^{n-i} Y^i+\cdots + b_n Y^n,\\
\gamma(X,Y):=c_0X^n+n c_1 X^{n-1} Y+\cdots+{m \choose i} c_i X^{n-i} Y^i+\cdots + c_n Y^n,\\
\delta(X,Y):=d_0 X^n+n d_1 X^{n-1} Y+\cdots+{n \choose i} d_i X^{n-i} Y^i+\cdots + d_n Y^n,
\end{gather*}
Extend the derivations
 $\mathcal{D},$ $\mathcal{D}^*$ to the polynomial algebra 
$$\mathbb{K}[a_0,\ldots,a_n,b_0,\ldots,b_n,c_0,\ldots,c_n,d_0,\ldots,d_n],$$ by  $\mathcal{D}(b_i)=i b_{i-1},$  $\mathcal{D}^*(b_i)=(n-i) b_{i+1},$ $\mathcal{D}(c_i)=i c_{i-1},$  $\mathcal{D}^*(c_i)=(n-i) c_{i+1}$   and $\mathcal{D}(d_i)=i d_{i-1},$  $\mathcal{D}^*(d_i)=(n-i) d_{i+1}.$

Then the subalgebra $\ker( \mathcal{D}-Y  \frac{\partial}{\partial X})  \bigcap \ker (\mathcal{D}^*-X \frac{\partial}{\partial Y})$ of $\mathbb{K}[a_0,\ldots, d_n,X,Y]$ is called the algebra of joint covariants of the forms  $\alpha(X,Y),$ $\beta(X,Y),$ $\gamma(X,Y)$ and  $\delta(X,Y).$ The algebras of joint semi-invariants and joint invariants 
can similarly be defined.

The main computational tool of the classical invariant theory is the transvectant.
\begin{oz}
 The $r$-th transvectant of two covariants $f,$ $g$ of orders  $n$ and  $m$ is called the following covariant  
$$
(f,g)^r=\sum_{i=0}^r (-1)^i { r \choose i } \frac{\partial^r f}{\partial X^{r-i} \partial Y^i}   \frac{\partial^r g}{\partial X^{i} \partial Y^{r-i}}, r\leq \min(n,m).
$$
\end{oz}
For instance, the transvectants  $(f,g)^1$ ³ $(f,f)^2$ are equal  to the Jacobian  $J(f,g)$ and to the  Hessian  ${\rm Hes}(f).$

It is well known, see   \cite{Olver}, that each covariant can be represented by transvectants.

Computationally, the semi-invariants is  much simple objects that the covariants. To generate semi-invariants in  \cite{Bed_In7} we introduced the semi-transvectant as an analogue of the transvectant.
\begin{oz}
The semi-invariant   
$$
[p,q]^r:=\varkappa \left((\varkappa^{-1}(p),\varkappa^{-1}(q))^r\right), r \leqslant \min({\rm  ord}(p), {\rm  ord}(q)).
$$
is called the $r$-th semi-transvectant  of the semi-invariants  $p,q \in  \mathbb{K}[a_0,\ldots,d_n]$
\end{oz}
The formula holds

\begin{gather}
[p,q]^r=\sum_{i=0}^r (-1)^i { r \choose i } \frac{(\mathcal{D}^*)^i(p)}{[{\rm  ord}(p)]_i}   \frac{(\mathcal{D}^*)^{r-i}(q)}{[{\rm  ord}(q)]_{r-i}},\label{main}
\end{gather}
where  $[m]_i=m(m-1)\ldots (m-(i-1))$ is the falling factorial.

Directly from the definition we get  the following properties
$$
\begin{array}{l}
\displaystyle [p,q]^0=p\, q,\\ \\
\displaystyle [f,g]^k=(-1)^k [g,f]^k, \text{ so }  [f,f]^k=0 \text { if } k \text{  is odd}.\\
\end{array}
$$

{\bf Example 2.2}
$$
\begin{array}{l}
\displaystyle  [p,q]^1:=[p,q]=p \frac{\mathcal{D}^*(q)}{{\rm ord}(q)}-q \frac{\mathcal{D}^*(p)}{{\rm ord}(p)},  \text{ -- the  semi-jacobian of  } p \text{  and  }  q, \\ \\
\displaystyle [p,q]^2=p \frac{(\mathcal{D}^*)^2(q)}{[{\rm ord}(q)]_2}- 2 \frac{\mathcal{D}^*(p)}{[{\rm ord}(p)]} \frac{\mathcal{D}^*(q)}{[{\rm ord}(q)]} +q \frac{(\mathcal{D}^*)^2(p)}{[{\rm ord}(p)]_2}, \text{  -- the  semi-hessian of  }  p \text{  and  }  q .
\end{array}
$$
Up to a constant factor, the semi-hessian of the semi-covariant   $a_0$  equals
  $$ \frac{1}{2} [a_0,a_0]^2=a_0a_2-a_1^2= \begin{vmatrix}  a_0 & a_1 \\ a_1 & a_2\end{vmatrix},$$

\begin{oz}  
$$
\begin{array}{ll}
{\bf 1.}  & \text {The homogeneous polynomial   } F  \text { is called isobaric if it is an eigenvector } \\ 
&  \text { of the operator  } E,  \text{  i.e.  }  E(F)=\omega(F) F,  \omega(F) \in \mathbb{Z},
\\
{\bf 2.}  & \text {The  corresponding  eigenvalue }  \omega(F) \text{  is called the weight of the isobaric polynomial  } $F.$\\
\end{array}
$$
\end{oz}

The following theorem holds

\begin{te}
$$
\begin{array}{ll}
{\bf 1.}  & \omega(a_0^{k_0} a_1^{k_1}\cdots a_n^{k_n})=n (k_0+k_1+\cdots+k_n)-2(k_1+k_2+\cdots+k_n),\\

{\bf 2.}  & \text { if  }  s  \text { is a homogeneous isobaric semi-invariant then   } {\rm ord}(s)= \omega(s),\\

{\bf 3.}  & \text { if  }  p, q   \text { are  homogeneous isobaric semi-invariants then  }  \\
& \omega([p,q]^i)=\omega(p)+\omega(q)-2\,i.
\end{array}
$$
\end{te}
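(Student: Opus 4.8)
\emph{Plan.} I would deduce all three statements from the observation that $(\mathcal{D},\mathcal{D}^{*},E)$ is an $\mathfrak{sl}_2(\mathbb{K})$-triple acting locally finitely on $\mathbb{K}[a_0,\ldots,a_n]$. Part \textbf{1} is immediate: since $E$ is a derivation, the Leibniz rule applied to $E(a_i)=(n-2i)a_i$ gives
$$
E\bigl(a_0^{k_0}a_1^{k_1}\cdots a_n^{k_n}\bigr)=\Bigl(\sum_{i=0}^{n}(n-2i)k_i\Bigr)a_0^{k_0}a_1^{k_1}\cdots a_n^{k_n},
$$
which is the asserted formula and shows that every monomial is isobaric; hence a homogeneous isobaric polynomial is a $\mathbb{K}$-linear combination of monomials of one fixed degree and one fixed weight. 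A short computation on the generators $a_i$ (legitimate because the commutator of two derivations is a derivation, so agreement on $a_0,\ldots,a_n$ forces equality everywhere) gives the relations $[\mathcal{D},\mathcal{D}^{*}]=E$, $[E,\mathcal{D}]=2\mathcal{D}$, $[E,\mathcal{D}^{*}]=-2\mathcal{D}^{*}$. In particular $E(v)=\lambda v$ forces $E(\mathcal{D}^{*}v)=(\lambda-2)\mathcal{D}^{*}v$: the operator $\mathcal{D}^{*}$ lowers the weight by $2$.

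\emph{Part 2.} A homogeneous isobaric semi-invariant $s$ is exactly a homogeneous $E$-eigenvector lying in $\ker\mathcal{D}$, i.e. a highest-weight vector of weight $\omega:=\omega(s)$. By induction on $k$ — the step using $[\mathcal{D},\mathcal{D}^{*}]=E$, $[E,\mathcal{D}^{*}]=-2\mathcal{D}^{*}$ and the fact that $(\mathcal{D}^{*})^{k-1}s$ has weight $\omega-2(k-1)$, the base case $k=1$ being $\mathcal{D}\mathcal{D}^{*}s=([\mathcal{D},\mathcal{D}^{*}]+\mathcal{D}^{*}\mathcal{D})s=Es=\omega s$ — one proves the ladder identity
$$
\mathcal{D}\bigl((\mathcal{D}^{*})^{k}s\bigr)=k\,(\omega-k+1)\,(\mathcal{D}^{*})^{k-1}s,\qquad k\geq 1 .
$$
Since $\mathcal{D}^{*}$ is locally nilpotent on $\mathbb{K}[a_0,\ldots,a_n]$ (it strictly lowers $\sum_i(n-i)k_i$ on every monomial), the integer $r:={\rm ord}(s)$ is finite, with $(\mathcal{D}^{*})^{r}s\neq0$ and $(\mathcal{D}^{*})^{r+1}s=0$. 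Feeding $(\mathcal{D}^{*})^{r+1}s=0$ into the ladder identity with $k=r+1$ yields $(r+1)(\omega-r)(\mathcal{D}^{*})^{r}s=0$; as $(\mathcal{D}^{*})^{r}s\neq0$ and $r+1\neq0$, we get $\omega(s)={\rm ord}(s)$ (and incidentally $\omega(s)\geq0$).

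\emph{Part 3.} Every summand of \eqref{main} is a scalar multiple of $(\mathcal{D}^{*})^{i}(p)\,(\mathcal{D}^{*})^{r-i}(q)$, which — since $\mathcal{D}^{*}$ lowers weight by $2$ — is isobaric of weight $(\omega(p)-2i)+(\omega(q)-2(r-i))=\omega(p)+\omega(q)-2r$, independently of $i$. As $E$ is a derivation, a linear combination of isobaric polynomials of a common weight is isobaric of that weight, so $[p,q]^{r}$ is isobaric with $\omega([p,q]^{r})=\omega(p)+\omega(q)-2r$ (whenever it is nonzero). Alternatively one may argue through orders: by Part \textbf{2} the covariants $\varkappa^{-1}(p)$, $\varkappa^{-1}(q)$ have orders $\omega(p)$, $\omega(q)$; taking $r$ partial derivatives in $X,Y$ of each factor shows $(\varkappa^{-1}(p),\varkappa^{-1}(q))^{r}$ has order $\omega(p)+\omega(q)-2r$; and since $\varkappa$ preserves order (Theorem 2.1 together with the formula for $\varkappa^{-1}$) while order equals weight by Part \textbf{2}, the conclusion follows.

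\emph{Main obstacle.} Parts \textbf{1} and \textbf{3} are essentially weight bookkeeping. The substantive step is Part \textbf{2}: pinning down the $\mathfrak{sl}_2$-commutation relations with the correct signs, establishing the ladder identity, and — crucially — verifying that $\mathcal{D}^{*}$ is locally nilpotent so that ${\rm ord}(s)$ is well defined. Once this is in place the statement is a piece of the classical finite-dimensional representation theory of $\mathfrak{sl}_2$; the only delicate point is keeping the normalisation of the triple consistent throughout.
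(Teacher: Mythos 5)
Your argument is correct, and in fact the paper states this theorem without any proof at all, so there is nothing to compare it against; what you give is the standard $\mathfrak{sl}_2$ highest-weight-vector argument, and every step checks out: the commutation relations $[\mathcal{D},\mathcal{D}^{*}]=E$, $[E,\mathcal{D}]=2\mathcal{D}$, $[E,\mathcal{D}^{*}]=-2\mathcal{D}^{*}$ hold on the generators (e.g.\ $[\mathcal{D},\mathcal{D}^{*}](a_i)=\bigl((n-i)(i+1)-i(n-i+1)\bigr)a_i=(n-2i)a_i$), the ladder identity $\mathcal{D}\bigl((\mathcal{D}^{*})^{k}s\bigr)=k(\omega-k+1)(\mathcal{D}^{*})^{k-1}s$ follows by the induction you describe, and the local nilpotency of $\mathcal{D}^{*}$ (it strictly decreases $\sum_i(n-i)k_i$ on monomials) makes ${\rm ord}(s)$ well defined, after which evaluating the ladder at $k={\rm ord}(s)+1$ forces $\omega(s)={\rm ord}(s)$. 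Part 3 is likewise fine: each summand of formula (3) is isobaric of weight $\omega(p)+\omega(q)-2r$ because $\mathcal{D}^{*}$ lowers weight by $2$, with the (correct) caveat that the conclusion is about $[p,q]^{r}$ when it is nonzero. One point worth flagging: your Leibniz computation in Part 1 yields $\omega(a_0^{k_0}\cdots a_n^{k_n})=\sum_i(n-2i)k_i=n\sum_i k_i-2\sum_i i\,k_i$, which is \emph{not} literally the formula printed in the theorem ($n\sum_i k_i-2\sum_{i\geq 1}k_i$); the printed version is a typo — it would make $a_0a_2-a_1^2$ non-isobaric for $n=2$, contradicting Example 2.1 — and your version is the correct one, so you should not describe it as "the asserted formula" without noting the discrepancy.
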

Throughout this paper  the  semi-invariant will mean an  isobaric homogeneous semi-invariant.

\begin{oz}
A semi-invariant  $S$ of the binary form of order  $n$ is called proper if  $\displaystyle \frac{\partial S}{\partial a_n} \neq 0.$
\end{oz}

In other words, a semi-invariant  is the  proper one of the binary form of degree $n$ if  it is not a semi-invariant of a binary form  of smaller order.

{\bf Problem.} Find all irreducible proper semi-invariants of the binary form of order $n.$ 

\section{The main theorems}

The following theorem is  crucial  for the constructions  of identities for the Appell polynomials.
\begin{te} Let  $\varphi_ {\mathcal{A}} \colon \mathbb{K}[a_0,a_1,\ldots, a_n] \to \mathbb{K}[x]$ be the substitution homomorphism
$$
\varphi_ {\mathcal{A}}(a_i)= A_i(x).
$$
Then    $$\ker^* \varphi_ {\mathcal{A}}=\mathcal{S}_n.$$
\end{te}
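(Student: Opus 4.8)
The goal is to show that a polynomial $S \in \mathbb{K}[a_0,\dots,a_n]$ satisfies $\varphi_{\mathcal{A}}(S) \in \mathbb{K}$ if and only if $\mathcal{D}(S) = 0$. The strategy is to transport the differential operator $d/dx$ on $\mathbb{K}[x]$ through the homomorphism $\varphi_{\mathcal{A}}$ and identify its pullback with $\mathcal{D}$.

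First I would establish the key intertwining relation $\dfrac{d}{dx}\circ \varphi_{\mathcal{A}} = \varphi_{\mathcal{A}} \circ \mathcal{D}$. Both sides are derivations from $\mathbb{K}[a_0,\dots,a_n]$ to $\mathbb{K}[x]$ (the left side via the module structure induced by $\varphi_{\mathcal{A}}$), so it suffices to check them on the generators $a_i$. On $a_i$ the left side gives $\dfrac{d}{dx}A_i(x) = i\,A_{i-1}(x)$ by the defining Appell property (1), while the right side gives $\varphi_{\mathcal{A}}(\mathcal{D}(a_i)) = \varphi_{\mathcal{A}}(i\,a_{i-1}) = i\,A_{i-1}(x)$; these agree. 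Hence the identity holds on all of $\mathbb{K}[a_0,\dots,a_n]$. An immediate consequence: $\mathcal{D}(S)=0$ implies $\dfrac{d}{dx}\varphi_{\mathcal{A}}(S)=0$, so $\varphi_{\mathcal{A}}(S)$ is a constant; this gives the inclusion $\mathcal{S}_n \subseteq \ker^*\varphi_{\mathcal{A}}$ together with the promised identity $S(\mathcal{A}) = \|S(\mathcal{A})\|$.

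For the reverse inclusion $\ker^*\varphi_{\mathcal{A}} \subseteq \mathcal{S}_n$, suppose $\varphi_{\mathcal{A}}(S) \in \mathbb{K}$. Then $\dfrac{d}{dx}\varphi_{\mathcal{A}}(S) = 0$, so by the intertwining relation $\varphi_{\mathcal{A}}(\mathcal{D}(S)) = 0$, i.e. $\mathcal{D}(S) \in \ker \varphi_{\mathcal{A}}$. To conclude $\mathcal{D}(S) = 0$ it remains to show that $\ker\varphi_{\mathcal{A}} \cap (\text{image of }\mathcal{D}) = 0$, which will follow once we know $\ker\varphi_{\mathcal{A}} = 0$, that is, $\varphi_{\mathcal{A}}$ is injective. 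This is the point where the Appell hypothesis $\deg A_i = i$ is essential: since $\{A_0,A_1,\dots,A_n\}$ is a sequence of polynomials with strictly increasing degrees, the $A_i(x)$ are algebraically independent over $\mathbb{K}$ — one can see this by a leading-term / degree argument, noting that distinct monomials $a_0^{k_0}\cdots a_n^{k_n}$ are sent to polynomials whose degrees $\sum i\,k_i$ together with the induced lexicographic data cannot all cancel, or more cleanly by a transcendence-degree count combined with the fact that a triangular change of variables shows $\mathbb{K}(A_0,\dots,A_n) = \mathbb{K}(x)$ has transcendence degree $1$ over $\mathbb{K}$ — wait, that last remark actually shows they are \emph{not} independent for $n\ge 1$. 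So the correct statement is not injectivity of $\varphi_{\mathcal{A}}$ but rather the weaker fact that $\mathcal{D}(S) \in \ker\varphi_{\mathcal{A}}$ forces $\mathcal{D}(S)=0$.

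The cleanest route to this, and the step I expect to be the main obstacle, is to argue directly about $\mathcal{D}$. Introduce a grading or filtration on $\mathbb{K}[a_0,\dots,a_n]$ and observe that $\ker\varphi_{\mathcal{A}}$ is generated by elements expressing the $A_i$ in terms of $a_0 = A_0$ and one transcendental generator; more usefully, note that $\mathcal{D}$ is locally nilpotent with $\mathcal{D}(a_1)=a_0 \ne 0$, so $a_0$ is not in the image of $\mathcal{D}$ and $\mathbb{K}[a_0,\dots,a_n]$ is a polynomial ring over $\ker\mathcal{D} = \mathcal{S}_n$ in the single variable $a_1$ (a standard structure theorem for locally nilpotent derivations with a slice, since $\mathcal{D}(a_1) = a_0$ is a nonzero constant of $\mathcal{D}$ after localizing at $a_0$). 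Writing $S = \sum_j s_j a_1^j$ with $s_j \in \mathcal{S}_n[a_0^{-1}]$, one computes $\mathcal{D}(S) = \sum_j j\, s_j\, a_0\, a_1^{j-1}$, and then checks that $\varphi_{\mathcal{A}}$ restricted to this presentation is injective on the subring $\mathcal{S}_n$ (because semi-invariants are, up to the action, functions of the roots that survive the substitution) while $\varphi_{\mathcal{A}}(a_1) = A_1(x) = x\cdot A_0(0) + A_1(0)$ is a genuine transcendental — so $\varphi_{\mathcal{A}}(\mathcal{D}(S))=0$ forces each $j\, s_j = 0$, hence $\mathcal{D}(S)=0$. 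The delicate part is justifying the injectivity of $\varphi_{\mathcal{A}}$ on $\mathcal{S}_n$; this is where I would invoke the known structure of the semi-invariant algebra together with the fact that the leading coefficient map $\varkappa$ and the Appell normalization $A_0 = \text{const}$ make the substitution on $\mathcal{S}_n$ amount to a harmless rescaling.
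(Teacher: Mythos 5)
The first half of your argument is correct and is essentially the paper's proof of the easy inclusion: you establish the intertwining relation $\tfrac{d}{dx}\circ\varphi_{\mathcal{A}}=\varphi_{\mathcal{A}}\circ\mathcal{D}$ by observing that both sides are derivations along $\varphi_{\mathcal{A}}$ and agree on the generators $a_i$ (the paper reaches the same relation by an induction on degree), and this immediately gives $\mathcal{S}_n\subseteq\ker^*\varphi_{\mathcal{A}}$ together with the identity $S(\mathcal{A})=\|S(\mathcal{A})\|$. That inclusion is the only part of the theorem actually used in the rest of the paper.

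The genuine gap is in the reverse inclusion, and you diagnosed the difficulty correctly: the intertwining only yields $\varphi_{\mathcal{A}}(\mathcal{D}(S))=0$, i.e.\ $\mathcal{D}(S)\in\ker\varphi_{\mathcal{A}}$, and $\varphi_{\mathcal{A}}$ is far from injective. (The paper's own proof elides exactly this point by writing $\mathcal{D}(g(a_0,\ldots,a_n))=\tfrac{d}{dx}\,g(A_0(x),\ldots,A_n(x))$, silently identifying $\mathcal{D}(g)$ with its image under $\varphi_{\mathcal{A}}$.) Your attempted repair, however, fails at its last step: you need $\varphi_{\mathcal{A}}$ to be injective on $\mathcal{S}_n$ (or on $\mathcal{S}_n[a_0^{-1}]$), and it is not. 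For the Bernoulli sequence with $n=2$ the semi-invariant $h=a_0a_2-a_1^2+\tfrac1{12}a_0^2$ satisfies $\mathcal{D}(h)=0$ and $\varphi_{\mathcal{B}}(h)=B_2(x)-B_1(x)^2+\tfrac1{12}=0$, so $h$ is a nonzero element of $\mathcal{S}_2\cap\ker\varphi_{\mathcal{B}}$ and no ``harmless rescaling'' argument is available. Worse, the reverse inclusion itself is false as stated: $g=a_2-a_1^2+\tfrac1{12}$ satisfies $\varphi_{\mathcal{B}}(g)=0\in\mathbb{K}$, hence $g\in\ker^*\varphi_{\mathcal{B}}$, while $\mathcal{D}(g)=2a_1(1-a_0)\neq 0$, so $g\notin\mathcal{S}_2$ and $\ker^*\varphi_{\mathcal{B}}\supsetneq\mathcal{S}_2$. (Consistently with the intertwining, $\varphi_{\mathcal{B}}(\mathcal{D}(g))=2B_1(x)(1-1)=0$ even though $\mathcal{D}(g)\neq0$.) So the step you flagged as ``the main obstacle'' cannot be overcome; only the inclusion $\mathcal{S}_n\subseteq\ker^*\varphi_{\mathcal{A}}$ is provable, and a correct version of the theorem would have to either weaken the equality to this inclusion or impose additional restrictions (e.g.\ on homogeneity and isobarity) and supply a genuinely new argument.
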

\begin{proof}
First we shall show that the homomorphism  $\varphi_ {\mathcal{A}}$ commutes with the derivative operator $\displaystyle \frac{d}{dx},$  i.e.
$$
\varphi_ {\mathcal{A}}( \mathcal{D}(h(a_0,a_1,\ldots,a_n)))=\frac{d}{dx}\left( \varphi_ {\mathcal{A}}(h(a_0,a_1,\ldots,a_n)) \right),
$$
for all $h(a_0,a_1,\ldots,a_n) \in \mathbb{K}[a_0,a_1,\ldots, a_n].$
The proof is by induction over  the degree of the polynomial 
  $h(a_0,a_1,\ldots,a_n).$

Show that the statement holds for  all  polynomials of degree 1:
$$
\varphi_ {\mathcal{A}}( \mathcal{D}(a_i))=\varphi_ {\mathcal{A}}(i a_{i-1})=i A_{i-1}(x)=\frac{d}{dx}A_i(x)=\frac{d}{dx}\varphi_ {\mathcal{A}}(a_i).
$$
Assume that it  holds for all  polynomials   $f \in \mathbb{K}[a_0,a_1,\ldots, a_n]$ , $\deg(f) \leqslant k.$
 $$\varphi_ {\mathcal{A}}( \mathcal{D}(f))=\frac{d}{dx}\varphi_ {\mathcal{A}}(f) .$$ Then for all $i$ we have
$$
\begin{array}{l}
\varphi_ {\mathcal{A}}( \mathcal{D}(a_i f))=\varphi_ {\mathcal{A}}( \mathcal{D}(a_i) f)+\varphi_ {\mathcal{A}}( a_i \mathcal{D}(f))=\varphi_ {\mathcal{A}}( \mathcal{D}(a_i)) \varphi_ {\mathcal{A}}(f)+\varphi_ {\mathcal{A}}( a_i) \varphi_ {\mathcal{A}}(\mathcal{D}(f))=\\ \\

\displaystyle =\frac{d}{dx}\varphi_ {\mathcal{A}}(a_i)  \varphi_ {\mathcal{A}}(f)+\varphi_ {\mathcal{A}}( a_i) \frac{d}{dx}\varphi_ {\mathcal{A}}(f) =\frac{d}{dx}\left( \varphi_ {\mathcal{A}}(a_i) \varphi_ {\mathcal{A}}(f) \right)=\frac{d}{dx}\left( \varphi_ {\mathcal{A}}(a_i f) \right).
\end{array}
$$
The linearity of the derivations  $\mathcal{D},$  $\displaystyle \frac{d}{dx}$ and the linearity of the homomorphism $\varphi_ {\mathcal{A}}$ yield that the statement holds for all polynomials of the degree $k+1.$

Thus, by induction  the  $\varphi_ {\mathcal{A}}$ commutes with the derivative  $\displaystyle \frac{d}{dx}.$

Show that  $\mathcal{S}_n \subset\ker^* \, \varphi_{\mathcal{A}}.$
For   $h(a_0,a_1,\ldots,a_n) \in \mathcal{S}_n$ we have  
$$
 \frac{d}{dx}\left(h(A_0(x),\ldots, A_n(x))\right)= \mathcal{D}\varphi_ {\mathcal{A}}(h(A_0(x),\ldots, A_n(x)))=\mathcal{D}(h(a_0,\ldots, a_n))=0.
$$
Therefore, $h(A_0(x),\ldots, A_n(x))$  is a constant as claimed. 

On the contrary, assume  $g(A_0(x),\ldots, A_n(x)) \in \mathbb{K}.$ Then 
$$
 \mathcal{D}(g(a_0,\ldots, a_n))=  \frac{d}{dx}g(A_0(x),\ldots, A_n(x))=0.
$$
Thus $g(a_0,\ldots, a_n) \in \mathcal{S}_n$ and  $\mathcal{S}_n= \ker^* \, \varphi_{\mathcal{A}}.$
\end{proof}

So, any semi-invariant  $S(a_0,\ldots, a_n)$ defines the identity 

$$
S(\mathcal{A})=\| S(\mathcal{A}) \|,
$$
for the Appell polynomials  $\{ A_n(x)\}.$

\begin{oz}
For the semi-invariant $S(a_0,\ldots, a_n)$ the number $\| S(\mathcal{A}) \|$ is said to be the norm of the semi-invariant with respect to the Appell polynomials $\mathcal{A}.$
\end{oz}

{\bf Example 3.1}  Let $\varGamma(a_0,a_1,a_2)=\frac{1}{2}[a_0,a_0]^2$ be the semi-hessian. Then  
$$
\begin{array}{l}
\displaystyle \varGamma(\mathcal{B})=B_0(x) B_2(x)-B_1(x)^2=\frac{1}{6}+{x}^{2}-x- \left( x-\frac{1}{2} \right) ^{2}=-\frac{1}{12},\\
\displaystyle \varGamma(\mathcal{E})=E_0(x) E_2(x)-E_1(x)^2={x}^{2}-x- \left( x-\frac{1}{2}\right) ^{2}=-\frac{1}{4},\\

\displaystyle \varGamma(\mathcal{H})=H_0(x) H_2(x)-H_1(x)^2=-1+x^2-x^2=-1,\\
\varGamma(\mathcal{T})=0.
\end{array}
 $$

\begin{te}
The semi-invariant   $S(a_0,a_1,\ldots,a_n)$ determines the identity $S(1,1,\ldots,1)=0.$
\end{te}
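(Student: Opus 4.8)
The plan is to apply Theorem 3.1 to the simplest Appell sequence $\mathcal{T}=\{1,x,x^{2},\ldots\}$, which was already noted in the introduction to consist of Appell polynomials. Since $S$ is a semi-invariant, Theorem 3.1 gives $S\in\mathcal{S}_{n}=\ker^{*}\varphi_{\mathcal{T}}$, so the polynomial $\varphi_{\mathcal{T}}(S)=S(1,x,x^{2},\ldots,x^{n})$ is a constant in $\mathbb{K}$; as a constant function is determined by its value at $x=1$, this constant equals $S(1,1,\ldots,1)$, and it remains only to show that it is $0$.

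To evaluate it I would use that $S$ is isobaric and homogeneous, say $\deg S=d$ and $\omega(S)=\omega$. For each monomial $a_{0}^{k_{0}}a_{1}^{k_{1}}\cdots a_{n}^{k_{n}}$ occurring in $S$ one has $k_{0}+\cdots+k_{n}=d$ and, by Theorem 2.2, $\sum_{i}(n-2i)k_{i}=\omega$; subtracting $n$ times the first identity from the second yields $\sum_{i}i\,k_{i}=\tfrac{1}{2}(nd-\omega)=:p$, a value independent of the monomial. Hence $\varphi_{\mathcal{T}}$ sends each such monomial to $x^{p}$, and collecting terms gives
\[
\varphi_{\mathcal{T}}(S)=\Big(\textstyle\sum_{\mathbf{k}}c_{\mathbf{k}}\Big)\,x^{p}=S(1,1,\ldots,1)\,x^{p}.
\]
Comparing with the previous paragraph, $S(1,1,\ldots,1)\,x^{p}$ is constant in $x$, so either $p=0$ or $S(1,1,\ldots,1)=0$.

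It then remains to deal with the case $p=0$, that is $\omega=nd$. Here the only isobaric monomial of degree $d$ and weight $nd$ is $a_{0}^{d}$ (any factor $a_{i}$ with $i\ge 1$ strictly lowers the weight below $nd$), so $S$ is a scalar multiple of $a_{0}^{d}$; this is the trivial semi-invariant $S\in\mathbb{K}[a_{0}]$, which carries no genuine identity and which the statement must implicitly exclude — in particular it is automatically excluded once $S$ is a proper semi-invariant. In all remaining cases $p>0$, whence $S(1,1,\ldots,1)=0$, and in fact $S(\mathcal{T})=\varphi_{\mathcal{T}}(S)=0$.

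I expect the calculation to be completely routine; the one point that needs care is the role of the \emph{isobaric} hypothesis, which is precisely what forces the exponent $p$ to be common to all monomials of $S$ — without it the conclusion fails, as $S=a_{0}a_{2}-a_{1}^{2}+a_{0}$ still lies in $\ker\mathcal{D}$ yet has $S(1,1,1)=1\neq0$. A more geometric reading is also available: the substitution $a_{i}\mapsto x^{i}$ turns $\alpha(X,Y)$ into $(X+xY)^{n}$, so $S(\mathcal{T})$ is the leading coefficient of a covariant evaluated on a totally degenerate binary form; but the degree-and-weight bookkeeping above is the most economical route.
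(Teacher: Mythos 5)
Your proof is correct and follows essentially the same route as the paper: evaluate $S$ on the Appell sequence $\mathcal{T}=\{x^n\}$, use homogeneity and isobaricity to see that $\varphi_{\mathcal{T}}(S)=S(1,1,\ldots,1)\,x^{p}$, and conclude from constancy that $S(1,1,\ldots,1)=0$. The one genuine addition on your side is the explicit treatment of the exponent $p=\tfrac12(nd-\omega)$ and of the degenerate case $p=0$ (i.e.\ $S\in\mathbb{K}[a_0]$), which the paper's proof silently skips when it asserts $\|S(\mathcal{T})\|=0$; as you observe, the theorem as literally stated fails for $S=a_0^{d}$, so your implicit-exclusion remark is a small but real improvement rather than a deviation.
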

\begin{proof}
It it easy to see  that for a homogeneous isobaric polynomial  $S(a_0,a_1,\ldots,a_n)$  we have 
$$
S(\mathcal{T})=S(1,x,x^2,\ldots,x^n)=x^m S(1,1,\ldots,1),
$$
for some integer  $m.$ 

Therefore, $\|S(\mathcal{T})\|=0.$
From another hand, the identity  $S(\mathcal{T})=\|S(\mathcal{T})\|,$ implies  $$x^m S(1,1,\ldots,1)=0,$$
for all  $x.$ Thus  $S(1,1,\ldots,1)=0.$
\end{proof}
For the algebra of  joint semi-invariants  one can easily formulate and prove similar theorems.

\section{Identities for unique  Appel  sequence}

To describe   identities for  Appell polynomials of the same  type let us describe the low degree proper semi-invariants for the finary form  $\alpha(X,Y).$
The formula  (\ref{main}) generates  semi-invariants of degree 2, namely $[a_0,a_0]^i,$ $i=0,\ldots,n. $  Therefore, a  proper semi-invariant of the degree 2 is  the semi-transvectant 
$$
[a_0,a_0]^n=\sum_{i=0}^n (-1)^i {n \choose i} a_i a_{n-i}.
$$
Denote it by  ${\rm Dv}_n(a_0)$ and its image  $\varphi_{\mathcal{A}}({\rm Dv}_n(a_0))$ denote by  ${\rm Dv}_n(\mathcal{A}):$ $$
{\rm Dv}_n(\mathcal{A}):=\sum_{i=0}^n (-1)^i {n \choose i} A_i(x) A_{n-i}(x).
$$
It is easy to check that the variable $a_i$ can be expressed by $a_0$ as follows $\displaystyle a_i=\frac{(\mathcal{D}^*)^i(a_0)}{[n]_i}.$ So, for simplicity of notation we  write ${\rm Dv}_n(a_0)$ instead of ${\rm Dv}_n(a_0,a_1,\ldots,a_n).$

{\bf Example  4.1.} For the Bernoulli polynomials we have
$$
{\rm Dv}_n(\mathcal{B})=\sum_{i=0}^n (-1)^i {n \choose i} B_i(x) B_{n-i}(x).
$$
Now
$$
\|{\rm Dv}_n(\mathcal{B})\|=\sum_{i=0}^n (-1)^i {n \choose i} B_i(0) B_{n-i}(0)=\sum_{i=0}^n (-1)^i {n \choose i} B_i B_{n-i}.$$
From other hand, by direct calculation  one can show  \cite{Lis} that 
$$
\sum_{i=0}^n (-1)^i {n \choose i} B_i B_{n-i}=(1-n)B_n.
$$
Therefore we obtain the identity for the Bernoulli polynomials $$
\sum_{i=0}^n (-1)^i {n \choose i} B_i(x) B_{n-i}(x)=(1-n)B_n.
$$

For the Euler polynomials we propose the conjecture

{\bf Conjecture.} $
\displaystyle \sum_{i=0}^n (-1)^i {n \choose i} E_i(x) E_{n-i}(x)=-2 E_{n+1}.
$

Theorem 3.2 implies the wel-known binomial identity $\displaystyle \sum_{i=1}^{n} (-1)^i {n \choose i}=0.$ 

 In the paper  \cite{Bed_In7} we  found another proper semi-invariant of degree 2:
$$
W_n(a_0):=\sum_{i=1}^{n} (-1)^i {n \choose i} a_{n-i} a_1^i.
$$

To construct a proper semi-invariant of degree 3 use the semi-hessian $\displaystyle \frac{1}{2} [a_0,a_0]^2= \begin{vmatrix}  a_0 & a_1 \\ a_1 & a_2    \end{vmatrix} $ of the semi-invariant $a_0.$
Denote $\displaystyle {\rm Tr}_n(a_0):=[a_0,\frac{1}{2} [a_0,a_0]^2]^n.$
 \begin{te}  For  $n\geqslant 4$ the formula holds

\begin{gather*}
{\rm Tr}_n(a_0): =\sum_{i=0}^n \sum_{j=0}^{i} \frac{ (-1)^i}{[2n-4]_{i}} { n \choose i } {i\choose j}a_{n-i}  \begin{vmatrix}  [n]_j \, a_j & [n-1]_{i-j}\, a_{i-j+1} \\ [n-1]_{j} \, a_{j+1} &[n-2]_{i-j} \,a_{i-j+2}   \end{vmatrix}.
\end{gather*}
\end{te}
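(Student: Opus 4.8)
The plan is to expand the outer semi-transvectant by means of the explicit formula \eqref{main} and then to compute the iterated action of $\mathcal{D}^*$ on the semi-hessian $q:=\tfrac{1}{2}[a_0,a_0]^2=a_0a_2-a_1^2$ via the Leibniz rule. First I would record the orders of the two arguments of the outer transvectant. Since $\omega(a_0)=n$ (first part of Theorem 2.2), the third part of Theorem 2.2 gives $\omega(q)=2\omega(a_0)-4=2n-4$, and the second part gives ${\rm ord}(a_0)=\omega(a_0)=n$ and ${\rm ord}(q)=\omega(q)=2n-4$. Hence $\min\{{\rm ord}(a_0),{\rm ord}(q)\}=\min\{n,2n-4\}$ equals $n$ exactly when $n\geqslant 4$; this is precisely the range in which the $n$-th semi-transvectant ${\rm Tr}_n(a_0)=[a_0,q]^n$ is defined, which explains the hypothesis.

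Next I would substitute $p=a_0$ (so ${\rm ord}(p)=n$), the above $q$ (so ${\rm ord}(q)=2n-4$) and $r=n$ into \eqref{main}. Using the identity $a_i=\frac{(\mathcal{D}^*)^i(a_0)}{[n]_i}$ recorded in the text, the factor produced by $a_0$ collapses and one gets
\[
[a_0,q]^n=\sum_{i=0}^{n}(-1)^i\binom{n}{i}\,a_i\,\frac{(\mathcal{D}^*)^{n-i}(q)}{[2n-4]_{n-i}}.
\]
Re-indexing $i\mapsto n-i$ and using $\binom{n}{n-i}=\binom{n}{i}$ rewrites this as a sum of the shape $\sum_i(\pm1)^i\binom{n}{i}\,a_{n-i}\,\frac{(\mathcal{D}^*)^{i}(q)}{[2n-4]_{i}}$, i.e.\ in the outer form appearing in the statement.

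It then remains to expand $(\mathcal{D}^*)^i(q)$. From $a_j=\frac{(\mathcal{D}^*)^j(a_0)}{[n]_j}$ together with the falling-factorial factorization $[n]_{j+k}=[n]_j\,[n-j]_k$ one obtains
\[
(\mathcal{D}^*)^k(a_0)=[n]_k\,a_k,\qquad (\mathcal{D}^*)^k(a_1)=[n-1]_k\,a_{k+1},\qquad (\mathcal{D}^*)^k(a_2)=[n-2]_k\,a_{k+2}.
\]
Applying the higher-order Leibniz rule $(\mathcal{D}^*)^i(uv)=\sum_{j=0}^i\binom{i}{j}(\mathcal{D}^*)^j(u)\,(\mathcal{D}^*)^{i-j}(v)$ to $q=a_0a_2-a_1^2$ and inserting these three formulas yields
\[
(\mathcal{D}^*)^i(q)=\sum_{j=0}^i\binom{i}{j}\Bigl([n]_j a_j\,[n-2]_{i-j}a_{i-j+2}-[n-1]_j a_{j+1}\,[n-1]_{i-j}a_{i-j+1}\Bigr),
\]
and the bracketed $j$-th summand is exactly the $2\times2$ determinant of the statement. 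Substituting this double sum into the previous display and collecting the scalar factors $\frac{1}{[2n-4]_i}\binom{n}{i}\binom{i}{j}$ produces the claimed identity.

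This argument is essentially bookkeeping rather than conceptual; the points that need care are (i) the order count that forces $n\geqslant 4$, (ii) the repeated use of $[n]_{j+k}=[n]_j[n-j]_k$ when pulling $(\mathcal{D}^*)^k(a_0),(\mathcal{D}^*)^k(a_1),(\mathcal{D}^*)^k(a_2)$ back into the coordinate basis $a_0,\dots,a_n$, and (iii) tracking the sign in the re-indexing $i\mapsto n-i$ (equivalently, fixing whether ${\rm Tr}_n(a_0)$ is read as $[a_0,q]^n$ or as $[q,a_0]^n$, which differ by $(-1)^n$). I do not expect any obstacle beyond these routine verifications.
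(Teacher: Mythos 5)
Your proposal is correct and follows essentially the same route as the paper: both expand $[a_0,\tfrac12[a_0,a_0]^2]^n$ via the explicit semi-transvectant formula \eqref{main}, use $(\mathcal{D}^*)^k(a_j)=[n-j]_k\,a_{j+k}$ together with the Leibniz (determinant-derivative) rule on the semi-hessian, and cancel the $[n]_{n-i}$ factors against $(\mathcal{D}^*)^{n-i}(a_0)$. Your explicit attention to the $(-1)^n$ sign in the re-indexing $i\mapsto n-i$ is a point the paper glosses over, but it does not change the substance of the argument.
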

\begin{proof}
Since the semi-hessian  has the   weight  $2n-4,$ then  the semi-transvectant  \\ $\displaystyle [a_0,\frac{1}{2} [a_0,a_0]^2]^n$ has the order $n+2n-4-2n=n-4.$ Thus it  well-defined for $n\geqslant 4.$
We  have 
$$
(\mathcal{D}^*)^i(a_k)=[n-k]_{i}\, a_{i+k}.
$$
By the determinant derivative rule we have 
\begin{gather*}
(\mathcal{D}^*)^i \left( \begin{vmatrix}  a_0 & a_1 \\ a_1 & a_2    \end{vmatrix}\right)=\sum_{j=0}^{i} {i\choose j}\begin{vmatrix}  (\mathcal{D}^*)^j(a_0) & (\mathcal{D}^*)^{i-j}(a_1) \\ ( \mathcal{D}^*)^j(a_1) & (\mathcal{D}^*)^{i-j}(a_2)
 \end{vmatrix}
=\\
=\sum_{j=0}^{i} {i\choose j}\begin{vmatrix}  [n]_j \, a_j & [n-1]_{i-j}\, a_{i-j+1} \\ [n-1]_{j} \, a_{j+1} &[n-2]_{i-j} \,a_{i-j+2}   \end{vmatrix}.
\end{gather*}

By (3) we get 
\begin{gather*}
[a_0,[a_0,a_0]^2]^n=\sum_{i=0}^n \frac{ (-1)^i}{[n]_{n-i} [2n-4]_{i}} { n \choose i } (\mathcal{D}^*)^{n-i}(a_0)(\mathcal{D}^*)^i\left(\begin{vmatrix}  a_0 & a_1 \\ a_1 & a_2    \end{vmatrix}\right)=\\
=\sum_{i=0}^n \frac{ (-1)^i}{[2n-4]_{i}} { n \choose i }\sum_{j=0}^{i} {i\choose j}a_{n-i}  \begin{vmatrix}  [n]_j \, a_j & [n-1]_{i-j}\, a_{i-j+1} \\ [n-1]_{j} \, a_{j+1} &[n-2]_{i-j} \,a_{i-j+2}   \end{vmatrix}.
\end{gather*}
\end{proof}
As above, the direct  calculations the $n$-th semi-transvectant ($n\geqslant 4$) of the two semi-hessians yields  the semi-invariant of degree 4:

\begin{gather*}
{\rm Ch}_n(a_0):=\left [\frac{1}{2} [a_0,a_0]^2,\frac{1}{2} [a_0,a_0]^2\right ]^n=\sum_{i=0}^n \sum_{j=0}^{i} \sum_{k=0}^{n-i} \frac{ (-1)^i { n \choose i } {i\choose j} {n-i\choose k}}{[2n-4]_{i}\,[2n-4]_{n-i}} A_{i,j,k}.
\end{gather*}
where 
\begin{gather*}
A_{i,j,k}:=\begin{vmatrix}  [n]_k \, a_k & [n-1]_{n-i-k}\, a_{n-i-k+1} \\ [n-1]_{k} \, a_{k+1} &[n-2]_{n-i-k} \,a_{n-i-k+2}   \end{vmatrix} \cdot  \begin{vmatrix}  [n]_j \, a_j & [n-1]_{i-j}\, a_{i-j+1} \\ [n-1]_{j} \, a_{j+1} &[n-2]_{i-j} \,a_{i-j+2}
 \end{vmatrix}.
\end{gather*}

Now, let us consider the discriminant  of a binary form. The discriminant is a well known  invariant which can be defined as   the $(2n-1) \times (2n-1)$ determinant  of the Sylvester matrix of the binary form $\alpha(X,Y):$  
\begin{gather*}
{\rm  Discr}_n(a_0):=\begin{vmatrix}
a_0 & n a_1 & \cdots  & a_n & 0 & \cdots &\cdots  & 0 \\
0 & a_0  & \cdots  &n a_{n-1} & a_n &0& \cdots & 0 \\
\hdotsfor{8}\\
0 & \cdots  & 0  & a_{0} &n  a_1 & {n \choose 2} a_2& \cdots & a_n \\
n a_0 & (n-1) n a_1 &  \dots   & n a_{n-1} & 0&0& \cdots & 0\\
0&n a_0  & \cdots & \dots   & n a_{n-1} & 0& \cdots & 0\\
\hdotsfor{8}\\
0& 0& \cdots &n a_0 & (n-1) n a_1 & (n-2) {n \choose 2}  a_2 & \dots   & n a_{n-1} 
\end{vmatrix}
\end{gather*}
The corresponding identities has  the form
 ${\rm Discr}_{n}(\mathcal{A})=\|{\rm Discr}_n(\mathcal{A}) \|.$

{\bf Example 4.2} Let  $\mathcal{A}=\mathcal{B},$ $n=3.$ Simplifying the identity  
$$
\begin{array}{l}
{\rm Discr}_3(\mathcal{B})=\|{\rm Discr}_3(\mathcal{B})\|, 
\end{array}
$$
we obtain
$$
\begin{array}{l}
\displaystyle -27\,{B_{{3}}}(x)^{2}{B_{{0}}}(x)^{2}+162\,B_{{3}}(x)B_{{0}}(x)B_{{1}}(x)B_{{2}}(x)+81\,
{B_{{2}}}(x)^{2}{B_{{1}}}(x)^{2}-\\
\\
\displaystyle -108\,{B_{{2}}}(x)^{3}B_{{0}}(x)-108\,{B_{{1}}}(x)^{3
}B_{{3}}(x)=\frac{1}{16}.
\end{array}
$$

{\bf Conjecture.} $ \|{\rm Discr}_{n}(\mathcal{H}) \| =\prod_{k=1}^n k^k.$

Thus, we get  the  five types of identities for the Appell polynomials.

\begin{te}
Let  $\mathcal{A} =\{A_n(x) \}$ be the Appell polynomials.  Then the following identities hold
\begin{gather}
{\rm Dv}_n(\mathcal{A})=\|{\rm Dv}_n(\mathcal{A}) \|, \\
{\rm Tr}_n(\mathcal{A})=\|{\rm Tr}_n(\mathcal{A}) \|, \\
 {\rm Ch}_n(\mathcal{A})=\|{\rm Ch}_n(\mathcal{A}) \|, \\
{\rm Discr}_{n}(\mathcal{A})=\|{\rm Discr}_n(\mathcal{A}) \|,\\
W_n( \mathcal{A})=\|W_n( \mathcal{A}) \|.
 \end{gather}
\end{te}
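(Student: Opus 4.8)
The plan is to deduce all five identities from Theorem 3.1, which identifies $\ker^*\varphi_{\mathcal{A}}$ with the algebra of semi-invariants $\mathcal{S}_n=\ker\mathcal{D}$. Indeed, if $S(a_0,\dots,a_n)\in\mathcal{S}_n$, then by Theorem 3.1 the image $\varphi_{\mathcal{A}}(S)=S(\mathcal{A})$ lies in $\mathbb{K}$, that is, $S(\mathcal{A})=\|S(\mathcal{A})\|$. So it is enough to show that each of the polynomials ${\rm Dv}_n(a_0)$, ${\rm Tr}_n(a_0)$, ${\rm Ch}_n(a_0)$, ${\rm Discr}_n(a_0)$, $W_n(a_0)$ is a semi-invariant of the generic binary form of order $n$; identities (4)--(8) then follow simultaneously by applying $\varphi_{\mathcal{A}}$. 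None of the explicit determinantal or summation expansions obtained above for these objects are needed here — only the way each was constructed.

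For the first three polynomials, membership in $\mathcal{S}_n$ is built into the semi-transvectant construction. By Example 2.1 the leading coefficient $a_0$ is a semi-invariant, and if $p,q$ are semi-invariants then so is every semi-transvectant $[p,q]^r$: by definition $[p,q]^r=\varkappa\bigl((\varkappa^{-1}(p),\varkappa^{-1}(q))^r\bigr)$, where $\varkappa^{-1}$ carries semi-invariants to covariants, the $r$-th transvectant of two covariants is again a covariant (the basic fact of classical invariant theory, see \cite{Olver}), and $\varkappa$ carries covariants back to semi-invariants by Theorem 2.1. Applying this, ${\rm Dv}_n(a_0)=[a_0,a_0]^n\in\mathcal{S}_n$; the semi-hessian $\frac{1}{2}[a_0,a_0]^2\in\mathcal{S}_n$; and hence ${\rm Tr}_n(a_0)=\bigl[a_0,\frac{1}{2}[a_0,a_0]^2\bigr]^n\in\mathcal{S}_n$ and ${\rm Ch}_n(a_0)=\bigl[\frac{1}{2}[a_0,a_0]^2,\frac{1}{2}[a_0,a_0]^2\bigr]^n\in\mathcal{S}_n$, the last two for $n\ge 4$, which is precisely the range in which the orders involved are nonnegative and these semi-transvectants make sense.

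For the discriminant I would either invoke the classical fact that ${\rm Discr}_n(a_0)$ is an invariant of the binary form $\alpha(X,Y)$, so that ${\rm Discr}_n(a_0)\in\mathcal{I}_n\subset\mathcal{S}_n$, or else check $\mathcal{D}({\rm Discr}_n(a_0))=0$ directly by the computation already used for $\Delta_3$ in the Introduction: differentiating the Sylvester determinant by the rule for derivatives of determinants produces a sum of determinants, and in each summand $\mathcal{D}$ has turned one row into a scalar multiple of another row of the same matrix, so that every summand has two proportional rows and therefore vanishes. Finally, $W_n(a_0)$ is a proper semi-invariant of the binary form of order $n$ by \cite{Bed_In7}; equivalently, $\mathcal{D}(W_n(a_0))=0$ may be verified by a short direct computation using $\mathcal{D}(a_1)=a_0$, $\mathcal{D}(a_j)=j\,a_{j-1}$ together with a shift of summation indices. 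Once all five polynomials are placed in $\mathcal{S}_n$, Theorem 3.1 delivers identities (4)--(8).

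The only genuinely non-elementary input is the classical theorem that transvectants of covariants are covariants — equivalently Theorem 2.1, that $\varkappa$ is an algebra homomorphism invertible via $\varkappa^{-1}$ — which we are entitled to assume here; everything else reduces to the determinant-derivative bookkeeping for ${\rm Discr}_n$ or the elementary index-shift cancellation for $W_n$. I expect the discriminant step to be the most tedious to write out in full, although conceptually it is entirely routine.
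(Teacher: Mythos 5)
Your proposal is correct and follows exactly the route the paper intends: the paper offers no separate proof of this theorem because it is an immediate corollary of Theorem 3.1 once each of the five polynomials is recognized as a semi-invariant, which is precisely how they were constructed in the preceding text (as semi-transvectants of $a_0$, as the classical discriminant, and as the semi-invariant $W_n$ from the cited reference). Your filling-in of the details — closure of $\mathcal{S}_n$ under semi-transvection via $\varkappa$, the determinant-derivative argument for ${\rm Discr}_n$ mirroring the $\Delta_3$ computation in the introduction — matches the paper's implicit argument.
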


By applying Theorem 3.3 to the above identities we derive the following binomial identities: 
\begin{gather}
\sum_{i=0}^n \sum_{j=0}^{i} \frac{ (-1)^i}{[2n-4]_{i}} { n \choose i } {i\choose j}\begin{vmatrix}  [n]_j  & [n-1]_{i-j} \\ [n-1]_{j}  &[n-2]_{i-j} \end{vmatrix}=0,\\
\sum_{i=0}^n \sum_{j=0}^{i} \sum_{k=0}^{n-i} \frac{ (-1)^i { n \choose i } {i\choose j} {n-i\choose k}}{[2n-4]_{i}\,[2n-4]_{n-i}}\begin{vmatrix}  [n]_k & [n-1]_{n-i-k} \\ [n-1]_{k} &[n-2]_{n-i-k}    \end{vmatrix} \cdot  \begin{vmatrix}  [n]_j  & [n-1]_{i-j} \\ [n-1]_{j}&[n-2]_{i-j}
 \end{vmatrix}
=0.
\end{gather}

\section{Joint identities}

Let us find  joint proper semi-invariants of the binary forms  $\alpha(X,Y)$ and  $\beta(X,Y).$

 First of all we consider the  $n$-th semi-transvectant  of the semi-invariants $a_0$ and $b_0:$
$$
{\rm Dv}_n(a_0,b_0):=[a_0,b_0]^n=\sum_{i=0}^n (-1)^i {n \choose i} a_i b_{n-i}.
$$
{\bf Example 5.1} For the Bernoulli and Euler polynomials we have 
$$
{\rm Dv}_n(\mathcal{B},\mathcal{E})=\sum_{i=0}^n (-1)^i {n \choose i} B_i(x) E_{n-i}(x).
$$
By direct calculations we get  
$$
\|{\rm Dv}_1(\mathcal{B},\mathcal{E}) \|=0, \|{\rm Dv}_2(\mathcal{B},\mathcal{E}) \|=-\frac{1}{3},\|{\rm Dv}_3(\mathcal{B},\mathcal{E}) \|=0, \|{\rm Dv}_4(\mathcal{B},\mathcal{E}) \|=\frac{7}{15}.
$$
For the general case we propose the conjecture

{\bf Conjecture.} $
\displaystyle \sum_{i=0}^n (-1)^i {n \choose i} B_i(x) E_{n-i}(x)=-2(2^{2n-1}-1)B_{2n}.
$

Similarly, the identity $$
{\rm Dv}_n(\mathcal{B},\mathcal{T})=\|{\rm Dv}_n(\mathcal{B},\mathcal{T})\|,
$$
implies that  ${\rm Dv}_n(\mathcal{B},\mathcal{T})=\displaystyle \sum_{i=0}^n (-1)^i {n \choose i} B_i(x)x^{n-i}.$ It follows $${\rm Dv}_n(\mathcal{B},\mathcal{T})_{{ |}_{x=0}}=\|{\rm Dv}_n(\mathcal{B},\mathcal{T})\|=B_n(0)=B_n.$$
After simplification we get the identity for the Bernoulli polynomials
$$
B_n(x)=\sum_{i=0}^{n-1} (-1)^{i+1} {n \choose i} B_i(x) x^{n-i}+B_n.$$
In the same way we get the identities for the Euler and Hermite polynomials
\begin{gather*}
E_n(x)=\sum_{i=0}^{n-1} (-1)^{i+1} {n \choose i} E_i(x) x^{n-i}+E_n,\\
H_n(x)=\sum_{i=0}^{n-1} (-1)^{i+1} {n \choose i} H_i(x) x^{n-i}+H_n(0).
\end{gather*}

The proper joint semi-invariants of degree 3 are the following semi-transvectants  $[a_0,[a_0,b_0]^i]^n$ for  $2i\leqslant n.$ By direct calculations for $i=1$  we get  
\begin{gather*}
{\rm Tr}_n(a_0,b_0):=[a_0,[a_0,b_0]^1]^n=\left [a_0,\begin{vmatrix}  a_0 & b_0 \\ a_1 & b_1    \end{vmatrix} \right]^n=\\
=\sum_{i=0}^n \sum_{j=0}^{i} \frac{ (-1)^i}{[2n-2]_{i}} { n \choose i } {i\choose j}a_{n-i}  \begin{vmatrix}  [n]_j \, a_j & [n]_{i-j}\, b_{i-j} \\ [n-1]_{j} \, a_{j+1} &[n-1]_{i-j} \,b_{i-j+1}   \end{vmatrix},
\end{gather*}
and 
\begin{gather*}
\overline{{\rm Tr}}_n(a_0,b_0):=[a_0,[a_0,b_0]^2]^n=\left [a_0, a_0 b_2-2 a_1 b_1+a_2 b_0 \right]^n=\\
=\sum_{i=0}^n \sum_{j=0}^{i} \frac{ (-1)^i}{[2n-2]_{i}} { n \choose i } {i\choose j}a_{n-i} A_{i,j},
\end{gather*}
where
$$
A_{i,j}:= [n]_i [n-2]_{i-j}a_i b_{i-j+2}-2 [n-1]_i [n-1]_{i-j} a_{i+1} b_{i-j+1}+[n-2]_i [n]_{i-j} a_{i+2} b_{i-j}.
$$

The well known joint covariant is the resultant of two binary form. The corresponding semi-invariant ${\rm  sRes}_n(a_0,b_0)$ has form 
\begin{gather*}
{\rm  sRes}_n(a_0,b_0):=\begin{vmatrix}
a_0 & n a_1 & \cdots  & a_n & 0 & \cdots &\cdots  & 0 \\
0 & a_0  & \cdots  & n a_{n-1} & a_n &0& \cdots & 0 \\
\hdotsfor{8}\\
0 & \cdots  & 0  & a_{0} & n a_1 & {n \choose 2} a_2& \cdots & a_n \\
b_0 & n b_1 & \cdots  & b_n & 0 & \cdots &\cdots  & 0 \\
0 & b_0  & \cdots  &n  b_{n-1} & b_n &0& \cdots & 0 \\
\hdotsfor{8}\\
0 & \cdots  & 0  & b_{0} & b_1 & {n \choose 2}  b_2& \cdots & b_n \\
\end{vmatrix}
\end{gather*}

{\bf Example 5.2} The semi-resultant of two binary forms of order $2$ leads to the following identity for the Bernoulli and Euler polynomials  
\begin{gather*}
{\rm sRes}_2(\mathcal{B},\mathcal{E})=\|{\rm sRes}_2(\mathcal{B},\mathcal{E})\|.
\end{gather*}
We expand the determinants and get 
$$
\begin{array}{l}
\displaystyle {B_{{2}}}(x)^{2}{E_{{0}}}(x)^{2}-2\,B_{{2}}(x)E_{{0}}(x)E_{{2}}(x)B_{{0}}(x)+{E_{{2}}}(x)^{
2}{B_{{0}}}(x)^{2}-4\,B_{{1}}(x)B_{{2}}(x)E_{{1}}(x)E_{{0}}(x)-\\
\displaystyle -4\,B_{{1}}(x)E_{{1}}(x)E_{{2
}}(x)B_{{0}}(x)+4\,E_{{2}}(x){B_{{1}}}(x)^{2}E_{{0}}(x)+4\,B_{{0}}(x) B_{{2}}(x){E_{{1}}}^{2
}(x)=\frac{1}{36}.
\end{array}
$$
Let us  find out the  joint proper semi-invariants of the binary forms  $\alpha(X,Y),$   $\beta(X,Y)$ and $\gamma(X,Y).$
Since the semi-jacobian   $[b_0,c_0]$ has the weight  $2\,n-2,$ the semi-transvectant  $[a_0,[b_0,c_0]]^n$ is well-defined. We have
$$
(\mathcal{D}^*)^i(b_k)=[n-k]_{i}\, b_{i+k}, (\mathcal{D}^*)^i(c_k)=[n-k]_{i}\, c_{i+k}.
$$
Therefore
\begin{gather*}
(\mathcal{D}^*)^i\left( \begin{vmatrix}  b_0 & c_0 \\ b_1 & c_1    \end{vmatrix}\right)=\sum_{j=0}^{i} {i\choose j} [n]_{j} [n-1]_{i-j}\begin{vmatrix} b_j & c_j \\ b_{i-j+1} &c_{i-j+1}  \end{vmatrix}.
\end{gather*}
Thus 
\begin{gather*}
[a_0,[b_0,c_0]]^n=\sum_{i=0}^n \frac{ (-1)^i}{[n]_{n-i} [2n-2]_{i}} { n \choose i } (\mathcal{D}^*)^{n-i}(a_0)(\mathcal{D}^*)^{i}\left( \begin{vmatrix} b_0 & c_0\\ b_{1} &c_{1}  \end{vmatrix}\right)=\\
=\sum_{i=0}^n \frac{ (-1)^i}{[n]_{n-i} [2n-2]_{i}} { n \choose i }[n]_{n-i}a_{n-i}\sum_{j=0}^{i} {i\choose j} [n]_{j} [n-1]_{i-j}\begin{vmatrix} b_j & c_j \\ b_{i-j+1} & c_{i-j+1}  \end{vmatrix}.
\end{gather*}
It implies
\begin{gather*}
{\rm Tr}_n(a_0,b_0,c_0):=[a_0,[b_0,c_0]]^n=\sum_{i=0}^n \sum_{j=0}^{i} \frac{ (-1)^i [n]_{j} [n-1]_{i-j}}{ [2n-2]_{i}} { n \choose i } {i\choose j}a_{n-i}  \begin{vmatrix} b_j & c_j \\ b_{i-j+1} & c_{i-j+1}  \end{vmatrix}.
\end{gather*}
 Finally, let us find  the  joint proper semi-invariants of the four  binary forms  $\alpha(X,Y),$   $\beta(X,Y),$ $\gamma(X,Y)$ and  $\delta(X,Y).$
It is easy to see that the determinant  
$$
\Delta:=\begin{vmatrix}
 b_0& c_0& d_0 \\
 b_1 & c_1& d_1 \\
 b_2 &  c_2 & d_2\\
\end{vmatrix}
$$
is a semi-invariant with the weight  $3n-6.$ Then  the semi-teransvectant $[d_0,\Delta]^n$ is well defined for  $n \geqslant 3.$
As above, we obtain 
\begin{gather*}
{\rm Ch}_n(a_0,b_0,c_0,d_0):=[d_0,\Delta]^n=\\
=\sum_{i=0}^n \frac{ (-1)^i}{[3n-6]_{i}} { n \choose i } a_{n-i}\sum_{i_1+i_2+i_3=i} \frac{i!}{ i_1! i_2! i_3!} \begin{vmatrix}
 [n]_{i_1} b_{i_1}&  [n]_{i_2} c_{i_2} & [n]_{i_3}  d_{i_3} \\
  [n-1]_{i_1}b_{i_1+1} &   [n-1]_{i_2}c_{i_2+1}&   [n-1]_{i_3}d_{i_3+1} \\
  [n-2]_{i_1}b_{i_1+2} &   [n-2]_{i_2}c_{i_2+2}&   [n-2]_{i_3}d_{i_3+2} \\
\end{vmatrix}.
\end{gather*}

Therefore we get the following identities for the   Appell polynomials of  different series $\mathcal{A}_1,$ $\mathcal{A}_2,$ $\mathcal{A}_3,$ $\mathcal{A}_4:$

\begin{te}

\begin{gather}
{\rm Dv}_n(\mathcal{A}_1,\mathcal{A}_2)=\|{\rm Dv}_n(\mathcal{A}_1,\mathcal{A}_2) \|, \\
{\rm Tr}_n(\mathcal{A}_1,\mathcal{A}_2)=\|{\rm Tr}_n(\mathcal{A}_1,\mathcal{A}_2)\|, \\
\overline{{\rm Tr}}_n(\mathcal{A}_1,\mathcal{A}_2)=\|\overline{{\rm Tr}}_n(\mathcal{A}_1,\mathcal{A}_2)\|, \\
 {\rm Ch}_n(\mathcal{A}_1,\mathcal{A}_2)=\| {\rm Ch}_n(\mathcal{A}_1,\mathcal{A}_2) \|, \\
{\rm sRes}_{n}(\mathcal{A}_1,\mathcal{A}_2)=\|{\rm  sRes}_n(\mathcal{A}_1,\mathcal{A}_2) \|,\\
 {\rm Tr}_n(\mathcal{A}_1,\mathcal{A}_2,\mathcal{A}_3)=\| {\rm Tr}_n(\mathcal{A}_1,\mathcal{A}_2,\mathcal{A}_3) \|,\\
{\rm Ch}_n(\mathcal{A}_1,\mathcal{A}_2,\mathcal{A}_3,\mathcal{A}_4)=\| {\rm Ch}_n(\mathcal{A}_1,\mathcal{A}_2,\mathcal{A}_3,\mathcal{A}_4) \|.
 \end{gather}
\end{te}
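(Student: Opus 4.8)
The plan is to deduce Theorem 5.1 from the joint analogue of Theorem 3.1 — more precisely, from the easy inclusion in that theorem — together with the observation that each of the seven expressions occurring in the statement is a joint semi-invariant of the relevant generic forms.

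First I would fix Appell sequences $\mathcal{A}_1,\mathcal{A}_2,\mathcal{A}_3,\mathcal{A}_4$, write $(\mathcal{A}_j)_i(x)$ for the $i$-th polynomial of $\mathcal{A}_j$, and introduce the joint substitution homomorphism
\[
\varphi\colon\mathbb{K}[a_0,\ldots,a_n,b_0,\ldots,b_n,c_0,\ldots,c_n,d_0,\ldots,d_n]\to\mathbb{K}[x]
\]
given on generators by $\varphi(a_i)=(\mathcal{A}_1)_i(x)$, $\varphi(b_i)=(\mathcal{A}_2)_i(x)$, $\varphi(c_i)=(\mathcal{A}_3)_i(x)$, $\varphi(d_i)=(\mathcal{A}_4)_i(x)$, together with the extension of $\mathcal{D}$ to this large polynomial algebra already fixed in Section 2, namely $\mathcal{D}(a_i)=ia_{i-1}$ and likewise for $b_i,c_i,d_i$. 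The key point is that $\varphi$ intertwines $\mathcal{D}$ with $d/dx$. On the degree-one generators this is nothing but the Appell relation (1) applied to each of the four sequences, namely $(\mathcal{A}_j)_i'(x)=i\,(\mathcal{A}_j)_{i-1}(x)$; and the induction on degree carried out in the proof of Theorem 3.1 — which uses only the Leibniz rule for $\mathcal{D}$ and $d/dx$, linearity of all the maps involved, and multiplicativity of $\varphi$ — is completely insensitive to how many families of variables are present. Consequently, exactly as in Theorem 3.1, any joint semi-invariant $S$, being an element of $\ker\mathcal{D}$ in the large algebra, satisfies $\frac{d}{dx}\varphi(S)=\varphi(\mathcal{D}S)=0$; hence $\varphi(S)=S(\mathcal{A}_1,\mathcal{A}_2,\ldots)$ is a constant, and that constant is recovered by evaluating at $x=0$, where it equals $\|S(\mathcal{A}_1,\mathcal{A}_2,\ldots)\|$.

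It then suffices to verify that ${\rm Dv}_n(a_0,b_0)$, ${\rm Tr}_n(a_0,b_0)$, $\overline{{\rm Tr}}_n(a_0,b_0)$, ${\rm Ch}_n(a_0,b_0)$, ${\rm sRes}_n(a_0,b_0)$, ${\rm Tr}_n(a_0,b_0,c_0)$ and ${\rm Ch}_n(a_0,b_0,c_0,d_0)$ all lie in $\ker\mathcal{D}$; the previous paragraph then yields the seven displayed identities at once. For the six transvectant-type expressions this is immediate from their construction in Section 5: each of $a_0,b_0,c_0,d_0$ is a semi-invariant of the corresponding generic form — it is the leading coefficient of a covariant, namely the form itself (Example 2.1) — and by Definition 2.4 together with Theorem 2.1 an $r$-th semi-transvectant of two semi-invariants is again a semi-invariant, so iterating the constructions of Section 5 keeps us inside $\ker\mathcal{D}$; the intermediate quantities occurring there (the $2\times2$ Jacobian-type determinants and the $3\times3$ determinant $\Delta$ formed from the $b,c,d$) are themselves semi-invariants, as one sees either from Theorem 2.1 or directly by applying $\mathcal{D}$ to the determinants column by column. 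For the semi-resultant ${\rm sRes}_n(a_0,b_0)$ one invokes the classical fact that the resultant of two binary forms of the same order is a joint invariant, hence in particular a joint semi-invariant, so that $\mathcal{D}({\rm sRes}_n(a_0,b_0))=0$ as well.

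I do not expect a genuine obstacle. The only two points requiring a word of comment are that the intertwining argument of Theorem 3.1 is indifferent to how many generic forms are in play, and that the semi-transvectant operation — as well as the resultant — keeps us inside $\ker\mathcal{D}$; both follow from material already established in Sections 2 and 3. The only mildly tedious item one might choose to spell out is the check that the explicit double, triple and multinomial sums displayed for $\overline{{\rm Tr}}_n$, ${\rm Ch}_n(a_0,b_0)$ and ${\rm Ch}_n(a_0,b_0,c_0,d_0)$ really coincide with the semi-transvectants defining them, but this is precisely the routine bookkeeping that the statement permits us to omit.
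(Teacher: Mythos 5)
Your proposal is correct and is essentially the argument the paper intends: the paper states Theorem 5.1 as an immediate consequence of the constructions of Section 5 (each expression is a joint semi-invariant, being a semi-transvectant of semi-invariants or, for the resultant, a classical joint invariant) together with the remark at the end of Section 3 that the joint analogue of Theorem 3.1 is proved in the same way. You merely spell out the details that the paper leaves implicit, so there is nothing further to add.
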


By using Theorem  3.3 we get the binomial identities:
\begin{gather}
\sum_{i=0}^n \sum_{j=0}^{i} \frac{ (-1)^i}{[2n-2]_{i}} { n \choose i } {i\choose j} \begin{vmatrix}  [n]_j  & [n]_{i-j}\\
 [n-1]_{j}&[n-1]_{i-j}  \end{vmatrix}=0,\\
\sum_{i=0}^n \sum_{j=0}^{i} \frac{ (-1)^i}{[2n-2]_{i}} { n \choose i } {i\choose j}([n]_i [n-2]_{i-j}-2 [n-1]_i [n-1]_{i-j}+[n-2]_i [n]_{i-j} )=0,\\
\sum_{i=0}^n \frac{ (-1)^i}{[3n-6]_{i}} { n \choose i } \sum_{i_1+i_2+i_3=i} \frac{i!}{ i_1! i_2! i_3!} \begin{vmatrix}
 [n]_{i_1} &  [n]_{i_2} & [n]_{i_3}  \\
  [n-1]_{i_1} &   [n-1]_{i_2}&   [n-1]_{i_3} \\
  [n-2]_{i_1} &   [n-2]_{i_2}&   [n-2]_{i_3} \\
\end{vmatrix}=0.
\end{gather}
Unfortunatelly, all efforts to find the norms of the above semi-invariants  were unsuccessful.

\end{document}